\renewcommand{\:}{\colon}
\renewcommand{\.}{\mskip.5\thinmuskip}
\newcommand{\ot}{\otimes}
\newcommand{\rarrow}{\longrightarrow}
\newcommand{\lrarrow}{\.\relbar\joinrel\relbar\joinrel\rightarrow\.}
\DeclareMathOperator{\Fr}{Fr}
\DeclareMathOperator{\GL}{GL}
\DeclareMathOperator{\PGL}{PGL}
\DeclareMathOperator{\Gal}{Gal}
\DeclareMathOperator{\Spec}{Spec}
\DeclareMathOperator{\Hom}{Hom}
\DeclareMathOperator{\Aut}{Aut}
\DeclareMathOperator{\chr}{char}
\newcommand{\A}{\mathbb A}
\newcommand{\Z}{\mathbb Z}
\newcommand{\Q}{\mathbb Q}
\newcommand{\C}{\mathbb C}
\renewcommand{\P}{{\mathbb P}}
\newcommand{\Sch}{\mathsf{Sch}}
\newcommand{\GrOut}{\mathsf{GrOut}}
\newcommand{\Grd}{\mathsf{Grd}}
\newcommand{\Fun}{\mathsf{Fun}}
\newcommand{\pf}{{\mathsf{pf}}}
\newcommand{\cn}{{\mathsf{cn}}}
\newcommand{\ct}{{\mathsf{ct}}}
\theoremstyle{plain}
\newtheorem{thm}{Theorem}
\newtheorem*{lem}{Lemma}
\begin{document}

\title{Noncommutative Kummer theory}

\author{Leonid Positselski}

\address{Department of Mathematics, Faculty of Natural Sciences,
University of Haifa, Mount Carmel, Haifa 31905, Israel}
\email{posic@mccme.ru}

\begin{abstract}
 We discuss lifting properties of continuous homomorphisms from
absolute Galois groups into (pro)finite groups.
 An analogy with the Langlands program is pointed out in the beginning
of the note.
\end{abstract}

\maketitle

 The Langlands program is a noncommutative generalization of class
field theory.
 Class field theory provides a description of abelian extensions of
number fields.

 Another description of abelian extensions of fields containing
enough roots of unity is provided by the (much more elementary)
Kummer theory.
 The two descriptions are related by the Tate duality in cohomology.
Unlike the class field theory, the Kummer theory applies in
a uniform manner to local and global fields, and also to fields
of arbitrary transcendence degree.

 Absolute Galois groups of arbitrary fields are very special objects
among profinite groups.
 Some of their special properties are easy to prove; some are very
difficult theorems, some others are conjectures, and probably there
are still more such properties yet to be discovered.
 The properties that are already known, either definitely or
conjecturally, do not fit into a single coherent picture.
 It appears that whatever angle one chooses to look on the absolute
Galois groups, one is likely to find that, looking from this angle,
they are very special in a separate way, seemingly unrelated to
their other special properties (see, e.~g.,
\cite{Lur,Pdivis,Pbogom,MO}).

 In particular, the cohomology of absolute Galois groups with
cyclotomic coefficients are described by the Milnor--Bloch--Kato
conjecture, proven by Rost, Voevodsky, \emph{et al.}~\cite{Voev}.
 In the case of $H^1$, this reduces to the Kummer theory.
 What can one say about the first cohomology of absolute Galois
groups with noncommutative (constant or close-to-constant)
coefficients?

 Attempts to answer this question bear some vague resemblance to
the Langlands story in that representations of a finite Galois
group $G=\Gal(L/K)$ in the linear groups like $\GL(n,K)$ or
$\PGL(2,K)$ get involved (if only because of the importance
of Hilbert theorem~90 for $\GL$).

\bigskip

 One of the formulations of the Kummer theory for fields with roots
of unity is the lifting property for homomorphisms from absolute
Galois groups $G_K=\Gal(\overline{K}/K)$ into cyclic groups.
 More precisely, for any prime number~$l$ and any field $K$ containing
$\!\sqrt[l^\infty]{1}$, any continuous group homomorphism
$f\:G_K\rarrow\Z/l^n$ can be lifted to a continuous homomorphism
$\tilde f\:G_K\rarrow\Z_l$.
 The explanation is that the homomorphism~$f$ corresponds to a field
extension $K[\!\sqrt[l^n]{b}]/K$ for some element $b\in K$, and
the extension $K[\!\sqrt[l^\infty]{b}]/K$ then provides
the desired lifting.
 Hence one way to approach noncommutative Kummer theory would be to
look into lifting properties of homomorphisms from absolute Galois
groups into noncommutative (pro)finite groups.

 Here is one example of a noncommutative lifting property of
the absolute Galois groups of arbitrary fields over a given field.

\begin{thm}
 Let $K$ be a field containing\/~$\overline{\Q}$.
 Then any profinite group homomorphism $G_K\rarrow S_n$ can be lifted
to a profinite group homomorphism $G_K\rarrow\widehat{\mathrm{Br}}_n$
(where $S_n$ denotes the symmetric group and\/ $\widehat{\mathrm{Br}}_n$
is the profinite completion of the braid group\/~$\mathrm{Br}_n$
mapping onto~$S_n$).
\end{thm}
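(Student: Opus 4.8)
The plan is to realize $f$ as the monodromy of an explicit finite covering and to identify $\widehat{\mathrm{Br}}_n$ with the \'etale fundamental group of the variety that classifies such coverings. Let $U_n\subset\A^n$ be the complement of the discriminant hypersurface, that is, the space of monic polynomials $x^n+a_{n-1}x^{n-1}+\dots+a_0$ with nonzero discriminant; this is an affine variety defined over~$\Q$, hence over~$\overline{\Q}$. Over the complex numbers, sending a polynomial to its unordered set of roots identifies $U_n(\C)$ with the configuration space of $n$ distinct points in~$\C$ modulo~$S_n$, whose topological fundamental group is $\mathrm{Br}_n$. By the Riemann existence theorem, together with the invariance of the (\'etale) fundamental group under enlargement of the algebraically closed base field in characteristic zero, one gets $\pi_1(U_{n,\overline{\Q}})\cong\widehat{\mathrm{Br}}_n$, and under this isomorphism the projection onto $S_n$ is the one induced by the connected $S_n$-cover of $U_n$ that orders the roots.

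First I would translate $f$ into geometry. A continuous homomorphism $f\colon G_K\to S_n$ is the same datum as an action of $G_K$ on an $n$-element set, i.e.\ a degree-$n$ \'etale $K$-algebra $A$. Since $K\supseteq\overline{\Q}$ is infinite, $A$ admits a primitive element, so $A\cong K[x]/(p)$ for a separable monic polynomial $p$ of degree~$n$, and the Galois action on the roots of $p$ in $\overline{K}$ recovers~$f$. The coefficients of $p$ define a point of $U_n(K)$; because $\overline{\Q}\subseteq K$, this is a $K$-point of the $\overline{\Q}$-variety $U_{n,\overline{\Q}}$, hence a morphism $\Spec K\to U_{n,\overline{\Q}}$.

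Next I would take $\tilde f$ to be the induced map on \'etale fundamental groups, $G_K=\pi_1(\Spec K)\to\pi_1(U_{n,\overline{\Q}})\cong\widehat{\mathrm{Br}}_n$. By functoriality this is a continuous homomorphism, so it remains only to check that it lifts $f$, namely that the composite $G_K\to\widehat{\mathrm{Br}}_n\to S_n$ equals~$f$. But the $S_n$-cover of $U_n$ described above pulls back along $\Spec K\to U_{n,\overline{\Q}}$ to the $S_n$-torsor of orderings of the roots of~$p$, and its monodromy is by construction the Galois action on those roots, which is~$f$. Thus $\tilde f$ is the sought-after lifting.

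The routine inputs—the primitive-element reduction and the functoriality of $\pi_1$—are harmless; the crux, which I would treat with the most care, is the identification $\pi_1(U_{n,\overline{\Q}})\cong\widehat{\mathrm{Br}}_n$ \emph{compatibly} with the projection to~$S_n$. This rests on the comparison between \'etale and topological fundamental groups and on base-change invariance of the geometric fundamental group in characteristic zero. It is precisely here that the hypothesis $\overline{\Q}\subseteq K$ enters in an essential way: it is what allows the classifying point to be taken over~$\overline{\Q}$ rather than merely over~$K$, so that $\tilde f$ lands in the geometric (braid) part $\widehat{\mathrm{Br}}_n$ rather than in an arithmetic extension of it.
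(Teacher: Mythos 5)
Your proposal is correct and follows essentially the same route as the paper: the paper also realizes $f$ via a squarefree degree-$n$ polynomial as a $K$-point of the complement $X\subset\A^n_{\overline\Q}$ of the discriminant locus, and lifts $f$ through the induced map $G_K\rarrow\pi_1^{\text{\'et}}(X)\simeq\widehat{\pi_1(X(\C))}\simeq\widehat{\mathrm{Br}}_n$. Your write-up merely makes explicit a few steps the paper leaves implicit (the primitive-element reduction, the Riemann existence and base-change inputs, and the compatibility with the projection to $S_n$).
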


 More generally, let $G$ be a finite group and $k$~be a fixed field.
 Let us say that $c\:\widetilde{G}\rarrow G$ is a \emph{weakly universal
absolute Galois covering of $G$ over~$k$} if $\widetilde{G}$ is
the absolute Galois group of a field over~$k$, \ $c$~is a profinite
group homomorphism, and for any field $K$ over~$k$, any profinite
group homomorphism $f\:G_K\rarrow G$ can be lifted to a homomorphism
$\tilde f\:G_K\rarrow \widetilde G$ so that $f=c\tilde f$.

\begin{thm}
 Let $k$ be a field and $V$ a faithful linear representation of
a finite group $G$ in a finite-dimensional $k$\+vector space.
 Denote by $k(V)$ the field of rational functions on $V$ and
by $E=k(V)^G$ its subfield of $G$\+invariant elements.
 Then the natural homomorphism $G_E=\Gal(\overline{k(V)}/k(V)^G)
\rarrow G=\Gal(k(V)/k(V)^G)$ is a weakly universal absolute
Galois covering of $G$ over~$k$.
\end{thm}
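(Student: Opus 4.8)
The statement comprises two assertions: that $G_E$ is genuinely the absolute Galois group of a field over~$k$, and the lifting property; only the latter has content. Since $V$ is faithful, the action of $G$ on the field $k(V)$ is faithful, so by Artin's theorem $k(V)/E$ is Galois with group~$G$; as $\overline{k(V)}=\overline E$ this gives the tautological exact sequence $1\rarrow G_{k(V)}\rarrow G_E\rarrow G\rarrow1$ with $c$ the quotient map, and $E\supseteq k$ shows $G_E$ is an absolute Galois group over~$k$. Geometrically, let $U\subseteq V$ be the open locus over which $V\rarrow V/G$ restricts to a finite étale $G$\+torsor (nonempty because $k(V)/E$ is separable), and let $W\subseteq U/G$ be its smooth locus; then $k(U/G)=E$, \ $k(U)=k(V)$, and $c$ is precisely the homomorphism classifying this generic $G$\+cover. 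The plan is to lift an arbitrary $f\:G_K\rarrow G$ by exhibiting the $G$\+torsor it defines as a specialization of this cover.

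First I would reduce the problem to producing a suitable $K$\+point of~$W$. A continuous homomorphism $f\:G_K\rarrow G$ defines a $G$\+torsor $T_f=\Spec B_f$ over~$K$, characterized by $\overline K\ot_K B_f\cong\mathrm{Map}(G,\overline K)$ with Galois action twisted by~$f$; here $f$ need not be surjective, in which case $T_f$ is merely disconnected. A $G$\+equivariant $K$\+morphism $T_f\rarrow V_K$ is the same as a $G$\+equivariant $k$\+linear map $V^{*}\rarrow B_f$, and the space of these is nonzero---of dimension $\dim_k V$ over~$K$, by Frobenius reciprocity applied to the regular representation $\overline K\ot_K B_f$. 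For $K$ infinite a general such map is a closed immersion whose image is a free $G$\+orbit lying over~$W$; faithfulness of~$V$ is what guarantees that the generic orbit is free. This no-name/versality step produces a point $w\in W(K)$ together with an isomorphism $w^{*}U\cong T_f$ of $G$\+torsors, compatible with the chosen trivializations, so that the class of $w^{*}U$ is~$f$ exactly. (When $K$ is finite, $G_K=\widehat\Z$ is free profinite of rank one, and since $c$ is surjective any preimage of $f(1)$ already defines a lift.)

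The heart of the matter is to promote the geometric point~$w$ to a homomorphism landing in the \emph{full} group $G_E$, rather than merely in a fundamental group of~$W$. Let $p$ be the image point of~$w$, a smooth point of $U/G$ with $\kappa(p)\hookrightarrow K$, and let $\widehat R=\widehat{\mathcal O}_{U/G,\,p}$; in equal characteristic Cohen's theorem gives $\widehat R\cong\kappa(p)[[t_1,\dots,t_c]]$, a complete---hence Henselian---regular domain with residue field $\kappa(p)$. Write $\widehat E_p=\mathrm{Frac}(\widehat R)\supseteq E$. Because $\widehat R$ is Henselian local, $\pi_1^{\mathrm{et}}(\Spec\widehat R)\cong G_{\kappa(p)}$, so the reduction map $q\:G_{\widehat E_p}\rarrow G_{\kappa(p)}$ is split by the coefficient field $\kappa(p)\hookrightarrow\widehat R$, yielding a section $\sigma\:G_{\kappa(p)}\rarrow G_{\widehat E_p}$ onto the unramified automorphisms. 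I then set
\[
\tilde f\;=\;\bigl(G_K\xrightarrow{\,r\,}G_{\kappa(p)}\xrightarrow{\,\sigma\,}G_{\widehat E_p}\xrightarrow{\,\mathrm{res}\,}G_E\bigr),
\]
with $r$ the restriction along $\kappa(p)\subseteq K$ and $\mathrm{res}$ induced by $E\subseteq\widehat E_p$. Since $U\rarrow U/G$ is finite étale at~$p$, its base change to $\widehat E_p$ is unramified and pulled back from $\Spec\widehat R$, so $c\circ\mathrm{res}$ factors as $f_p\circ q$, where $f_p\:G_{\kappa(p)}\rarrow G$ classifies the special fibre torsor $U_p$. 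Hence $c\circ\mathrm{res}\circ\sigma=f_p$, and as $T_f=w^{*}U$ is the base change of $U_p$ along $\kappa(p)\subseteq K$ we obtain $c\tilde f=f_p\circ r=f$.

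The delicate point is precisely this last specialization, and I would expect three things to require care. First, that the inertia of $\widehat E_p$ act trivially on the cover: this is exactly the étaleness secured by having placed $w$ in the free locus $U/G$ rather than merely in $V/G$. Second, that the reduction sequence of $\widehat E_p$ split; this is where equal characteristic and the coefficient field are indispensable, and where positive characteristic would have to be monitored. Third, that all identifications be made compatibly, so that $c\tilde f$ equals~$f$ on the nose and not merely up to conjugacy in~$G$---this is arranged by carrying the trivialization of $T_f$ through both the equivariant embedding and the specialization. Granting these, the construction furnishes a lift for every~$f$, which is exactly the assertion that $c\:G_E\rarrow G$ is a weakly universal absolute Galois covering of~$G$ over~$k$.
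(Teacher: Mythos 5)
Your first half is sound, and it is essentially the paper's own first step in different clothing: identifying the space of $G$\+equivariant maps $V^*\rarrow B_f$ as a $\dim_kV$\+dimensional $K$\+vector space via Frobenius reciprocity is the same triviality of the twisted form of $V$ that the paper extracts from $H^1(G_K,\GL(n,\overline K))=1$, and a general $K$\+rational vector in it gives the required point over the free locus. The gap is in the second half, at exactly the point you yourself flag as delicate: the section $\sigma\:G_{\kappa(p)}\rarrow G_{\widehat E_p}$. A coefficient field $\kappa(p)\hookrightarrow\widehat R$ does not split the map $q$; it only produces a map in the \emph{same} direction as $q$: the field inclusion $\kappa(p)\hookrightarrow\widehat E_p$ induces $G_{\widehat E_p}\rarrow G_{\kappa(p)}$, and under the Henselian identification $\pi_1^{\mathrm{et}}(\Spec\widehat R)\cong G_{\kappa(p)}$ this induced map \emph{is} $q$, not a section of it. Whether $q$ admits a section is a genuine question about the decomposition/inertia structure of $\mathrm{Frac}(\kappa(p)[[t_1,\dots,t_c]])$: it is true but not formal in equal characteristic zero (cut down to a formal curve and use Puiseux series), and in positive characteristic with wild ramification it requires a real splitting theorem, not a one-line appeal to Cohen's theorem. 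Since the entire passage from the geometric point $w$ to a homomorphism into the full group $G_E$ (rather than into the fundamental group of a neighbourhood of $p$) hangs on this, the proof as written is incomplete at its crux.

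It is worth seeing how the paper dodges this. Instead of taking an honest $K$\+point of a dense open subvariety and completing at its image, the paper first passes to $L=K(z_1,\dotsc,z_n)$ and uses the point $a_f(y_0)$ with $y_0=z_1v_1+\dotsb+z_nv_n$: its coordinates are algebraically independent over $k$, so it avoids \emph{every} proper closed subvariety defined over $k$ and hence defines an $L$\+point of $\Spec L\ot_kk(V)^G$. Pulling back the pro-torsor $\Spec L\ot_k\overline{k(V)}\rarrow\Spec L\ot_kk(V)^G$ along this point lands directly in $G_E$, with no completion and no section of any inertia sequence; the price is the standard, citable splitting of $G_L\rarrow G_K$ for a purely transcendental extension. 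To repair your argument you must either actually prove that $G_{\widehat E_p}\twoheadrightarrow G_{\kappa(p)}$ splits (in all characteristics), or replace the completed local ring by this ``point generic over~$k$'' device.
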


 In other words, any continuous group homomorphism $f\:G_K\rarrow G$
with a field $K$ containing $k$ lifts to a continuous homomorphism
$\tilde f\:G_K\rarrow G_E$.

 Here is a more concrete-looking result (cf.~\cite{Ser}).

\begin{thm}
 Let $k$ be an algebraically closed field, $G$ a finite subgroup
in\/ $\PGL(2,k)$, and $G'$ a finite subgroup in\/ $\GL(2,k)$ mapped
onto $G$ by the natural projection.
 Let\/ $\Fr\rarrow G$ be a homomorphism onto $G$ from a free profinite
group\/~$\Fr$.
 Then for any field $K$ over $k$, any profinite group homomorphism
$G_K\rarrow G$ that factors through the epimorphism $G'\rarrow G$
also factors through the epimorphism\/ $\Fr\rarrow G$.
\end{thm}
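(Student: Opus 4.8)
The plan is to reduce the assertion to a single, explicitly constructed free profinite covering of $G$, and then to produce the required factorization through it by applying Theorem~2 to the finite subgroup $G'\subseteq\GL(2,k)$. The starting observation is that the property ``a homomorphism $\rho\colon G_K\to G$ factors through a surjection $p\colon P\to G$ with $P$ free profinite'' is independent of the chosen free covering. Indeed, a free profinite group is projective, so given one free covering $\pi_0\colon F_0\to G$ together with a factorization $\sigma\colon G_K\to F_0$ of $\rho$, and given the covering $\pi\colon\Fr\to G$ of the statement, projectivity of $F_0$ yields $\phi\colon F_0\to\Fr$ with $\pi\phi=\pi_0$; then $\phi\sigma$ factors $\rho$ through $\pi$. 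Hence it suffices to exhibit one convenient free covering of $G$ through which every $\rho$ that factors through $G'\to G$ must factor.

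For that covering I would use the action of $G\subseteq\PGL(2,k)$ on $\P^1=\P(k^2)$. Writing $k(t)$ for the function field of this $\P^1$ and $k(s):=k(t)^G$, Lüroth's theorem shows that $k(s)$ is again a rational function field in one variable over $k$, so by the Harbater--Pop theorem on the geometric case of Shafarevich's conjecture the absolute Galois group $G_{k(s)}$ is free profinite; the quotient $\pi_0\colon G_{k(s)}\twoheadrightarrow\Gal(k(t)/k(s))=G$ is the sought free covering. It then remains to show that any $\rho\colon G_K\to G$ factoring through $G'\to G$ factors through $\pi_0$.

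To this end I invoke Theorem~2 for the group $G'$ and its faithful representation $V=k^2$. With $E':=k(V)^{G'}=k(x,y)^{G'}$, Theorem~2 makes $G_{E'}\to G'$ a weakly universal absolute Galois covering of $G'$; thus a lift $\rho'\colon G_K\to G'$ of $\rho$ lifts further to $\tilde\rho'\colon G_K\to G_{E'}$, so $\rho$ factors through the composite $G_{E'}\to G'\to G$. The bridge to the free covering is a natural surjection $r\colon G_{E'}\twoheadrightarrow G_{k(s)}$ with $\pi_0 r$ equal to this composite. I would obtain $r$ by showing that $E'$ is purely transcendental of degree one over $k(s)$: setting $t=y/x$, the scalar subgroup $A=G'\cap k^\times$ (finite cyclic, of order prime to $\chr k$) acts on $k(x,y)=k(t)(x)$ by $x\mapsto\lambda x$ and trivially on $t$, whence $k(x,y)^A=k(t)(x^{|A|})$; the residual $G$-action scales $x^{|A|}$ by a one-cocycle valued in $k(t)^\times$, which is a coboundary by Hilbert's Theorem~90 for $k(t)/k(s)$, and straightening it produces a $G$-invariant transcendental $\eta$ with $E'=k(s)(\eta)$. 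Consequently $k(s)$ is relatively algebraically closed in $E'$, the restriction $r\colon G_{E'}\to G_{k(s)}$ to the constant field extension is surjective, and both $\pi_0 r$ and $G_{E'}\to G'\to G$ compute the restriction of an automorphism to the $G$-stable subfield $k(t)$, so they coincide. Then $r\tilde\rho'$ factors $\rho$ through $\pi_0$, and the first paragraph transfers this to $\Fr$.

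The main obstacle is the middle step: identifying $E'=k(x,y)^{G'}$ as a rational function field over $k(s)$ compatibly with the maps down to $G$, i.e.\ realizing the free covering $G_{k(s)}\to G$ as the constant-field quotient of $G_{E'}\to G$. The clean route through it is the Hilbert~90 triviality of the $k^\times$-torsor coordinate, after separating off the (tame) scalar action of $A$. The hypothesis that $k$ is algebraically closed enters both here (Lüroth, and $A\subseteq k^\times$ cyclic of order prime to the characteristic) and, decisively, through the freeness of $G_{k(s)}$, which is the deep external input of the argument.
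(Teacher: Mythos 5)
Your argument is correct, but it reaches the key intermediate group by a genuinely different route than the paper. The paper never invokes Theorem~2 here: it reruns the cocycle--twisting mechanism directly, base-changing to $L=K(z)$, using $H^1(G_K,\GL(2,\overline K))=1$ to show that liftability to $G'$ forces the class of~$f$ in $H^1(G_K,\PGL(2,\overline K))$ to vanish, and then moving a transcendental point of $Y=\Spec L\ot_k k(\P^1)$ by the trivializing element $a_f$ to obtain an $L$\+point of $X=Y/G$ whose pullback lifts $f_L$ to $G_E$, \ $E=k(\P^1)^G$. You instead black-box that mechanism inside Theorem~2 applied to $G'$ acting on $V=k^2$, landing in $G_{E'}$ with $E'=k(V)^{G'}$, and supply an ingredient the paper does not need: the no-name-lemma-style computation that $E'=k(s)(\eta)$ is purely transcendental of degree one over $k(s)=k(\P^1)^G$ (Kummer descent along the scalar subgroup $A=G'\cap k^\times$, then Hilbert~90 for the finite Galois extension $k(t)/k(s)$ to straighten the residual cocycle $g\mapsto g(x^{|A|})/x^{|A|}$), which yields the surjection $G_{E'}\rarrow G_{k(s)}$ compatible with the maps onto~$G$. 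Both proofs therefore factor $f$ through the same group $G_{k(\P^1)^G}$; yours is more modular and makes the relation between the two classifying fields explicit, while the paper's is self-contained and shows exactly where the $\GL(2)$\+liftability hypothesis enters. One correction of emphasis: your appeal to the Harbater--Pop freeness of $G_{k(s)}$ (and hence also to L\"uroth) is overkill --- your own reduction in the opening paragraph uses only that the reference covering group is projective, and projectivity of $G_{k(s)}$ already follows from Tsen's theorem (cohomological dimension~$\le1$), which is precisely what the paper invokes.
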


 For example, let $D_n$ denote the dihedral group of order~$2n$.
 Then a continuous homomorphism $G_K\rarrow D_n$ lifts to a free
profinite group covering $D_n$ whenever it lifts to $D_{2n}$.
 The latter condition is necessary here: for any even~$n$, there
exists a field $K$ over $\overline{\Q}$ and a homomorphism
$G_K\rarrow D_n$ that cannot be lifted to $D_{2n}$.
 The situation is similar for the groups $G=A_4$, $S_4$, and~$A_5$.

 E.~g., a homomorphism $G_K\rarrow \Z/2\times\Z/2$ lifts to
the free profinite group with two generators $\Fr_2$ if and only
if the cup-product of the two related classes in $H^1(G_K,\Z/2)$
vanishes in $H^2(G_K,\Z/2)$.

\begin{proof}[Sketch of proof of Theorem~1]
 Let $X\subset\A^n_{\overline\Q}$ denote the $\overline\Q$\+variety of
all monic polynomials $q(y)=y^n+q_{n-1}y^{n-1}+\dotsb+q_0$ without
multiple roots and $Y\subset\A^n_{\overline\Q}$ the complement to
the union of the hyperplanes $\{y_i=y_j\}$, \ $1\le i<j\le n$.
 Let $\sigma\: Y\rarrow X$ be the natural Galois covering with
the Galois group~$S_n$.
 Notice that profinite group homomorphisms $f\:G_K\rarrow S_n$
come from polynomials of degree~$n$ over $K$ having no multiple
roots.
 In other words, any such homomorphism~$f$, viewed up to a conjugation
in $S_n$, corresponds to a pullback of the covering~$\sigma$ via some
$K$\+point $q\:\Spec K\rarrow X$ on $X$.
 The choice of a point~$q$ thus provides a lifting of~$f$ to
a homomorphism of \'etale fundamental groups $\tilde f\:G_K\rarrow
\pi_1^{\text{\'et}}(X)\simeq\widehat{\pi_1(X(\C))}\simeq
\widehat{\mathrm{Br}}_n$, defined up to a conjugation in
$\widehat{\mathrm{Br}}_n$.
\end{proof}

 Quite generally, connected groupoids and isomorphism classes of
functors between them form a category equivalent to the category
$\GrOut$ of groups and ``outer homomorphisms'' (homomorphisms up to
a conjugation in the target).
 Similarly, one can consider groupoids in which the automorphism
groups of objects are profinite rather than discrete groups (while
objects themselves still form a discrete set).
 The \'etale fundamental group is a functor from the category of
connected schemes to the category of profinite groups and
outer homomorphisms
$$
 \pi_1^{\text{\'et}}\:\Sch_\cn\lrarrow\GrOut_\pf
$$
which extends naturally to the ``\'etale fundamental groupoid''
functor $\Sch\rarrow\Grd_\pf$ from the category of schemes to
the category of profinite groupoids.

 Given two groupoids $\Gamma$ and $\Delta$, the category
$\Fun(\Gamma,\Delta)$ of functors from $\Gamma$ to $\Delta$ is
again a groupoid, whose set of connected components is
the set of morphisms $\Hom_{\Grd}(\Gamma,\Delta)$ in
the category~$\Grd$.
 Similarly, when $\Gamma$ is a profinite groupoid and $\Delta$
is a groupoid with finite automorphism groups of objects,
the category of continuous functors $\Fun_\ct(\Gamma,\Delta)$
is a groupoid with finite automorphism groups, whose set of
connected components is $\Hom_{\Grd_\pf}(\Gamma,\Delta)$.
 Given a connected scheme $X$ and a finite group $G$, the groupoid
$\Fun_\ct(\pi_1^{\text{\'et}}(X),G)$ is equivalent to the category of
schemes $Y$ endowed with a free action of $G$ and an isomorphism
$Y/G\simeq X$.

 Now let $Y$ be a scheme over a field $k$ endowed with a free action
of a finite group $G$, and let $X=Y/G$ be the quotient.
 Then there is a groupoid in the category of schemes over~$k$ with
the scheme of objects $X$ and the scheme of morphisms $(Y\times_kY)/G$.
 The source and target morphisms $(Y\times_kY)/G\rightrightarrows X$
are the projections onto the first and second factor, the unit
morphism $X\rarrow(Y\times_kY)/G$ comes from the diagonal
$Y\rarrow Y\times_kY$, the morphism of the inverse element
switches the factors, and the composition
$(Y\times_kY)/G\times_X(Y\times_kY)/G\simeq(Y\times_kY\times_kY)/G
\rarrow(Y\times_kY)/G$ drops the middle factor.
 Let us denote this groupoid in $\Sch/k$ by $\Gamma_G(Y)=
(X,\,(Y\times_kY)/G)$.

 Given a field $K$ over $k$, the functor of $K$\+points
$\Hom_{\Sch/k}(\Spec K,{-})$ transforms the groupoid $\Gamma_G(Y)$
into a groupoid $\Gamma_G(Y)(K)$ in the category of (discrete) sets
with the set of objects $X(K)$ and the set of
morphisms $((Y\times_kY)/G)(K)$.
 The rule assigning to a $K$\+point $q\:\Spec K\rarrow X$
the scheme $M(q)=\Spec K\times_X Y$ with the induced free action of
$G$ and the natural identification $M(q)/G\simeq\Spec K$
extends naturally to a \emph{fully faithful} functor of groupoids
$$
 M=M_{G,Y}(K)\:\Gamma_G(Y)(K)\lrarrow\Fun_\ct(G_K,G).
$$
 Indeed, given two $K$\+points $q_1,q_2\:\Spec K\rightrightarrows X$,
(iso)morphisms of schemes $M(q_1)\rarrow M(q_2)$ over $\Spec K$
correspond bijectively to $K$\+points of the scheme
$\Spec K\times_{X\times X}(Y\times_kY)/G\simeq
(M(q_1)\times_{\Spec K}M(q_2))/G$ (because given two principal
$G$\+sets $M_1$ and $M_2$, there is a natural bijection between
$G$\+isomorphisms $M_1\rarrow M_2$ and elements of the set
$(M_1\times M_2)/G$).
 Thus the groupoid $\Gamma_G(Y)$ in $\Sch/k$ can be thought of
as the \emph{classifying groupoid} for those outer homomorphisms
$G_K\rarrow G$ that come from $K$\+points on~$X$.

\bigskip

 Given a $k$\+variety $Y$ with a free action of finite group $G$, when
does an outer homomorphism $f\:G_K\rarrow G$ come from a $K$\+point
on $X=Y/G$?
 The following criterion can be found in~\cite{Ser}.
 The action of $G$ in $Y_K=\Spec K\times_{\Spec k}Y$ provides
an embedding of $G$ into the automorphism group $\Aut(Y_K)
\subset\Aut(Y_{\overline K})$, hence the homomorphism~$f$ defines
a cocycle representing a Galois cohomology class in
$H^1(G_K,\Aut(Y_{\overline K}))$.
 Twisting the $K$\+structure on $Y_K$ by this cocycle, one obtains
a $K$\+variety $Y_f$ together with an isomorphism of
$\overline K$\+varieties $Y_{\overline K}\simeq (Y_f)_{\overline K}$.
 So there is a natural bijection between the sets of
$\overline K$\+points $Y(\overline K)\simeq Y_f(\overline K)$.
 The homomorphism~$f$ comes from a point $x\in X(K)$ if and only if
there is a $K$\+point $y_f\in Y_f(K)$ such that the corresponding
point $y\in Y(\overline K)$ lies over~$x$.
 Thus the groupoid $\Gamma_G(Y)$ classifies those homomorphisms
$f\:G_K\rarrow G$ for which the variety $Y_f$ has a $K$\+point.

 Now let $A$ be a $k$\+algebraic group acting in $Y$ and $G\subset A(k)$
be a finite subgroup whose action in $Y$ is free.
 Using the embedding $G\rarrow A(k)\subset A(K)\subset A(\overline K)$,
a homomorphism $f\:G_K\rarrow G$ defines a Galois cohomology
class $\xi_f\in H^1(G_K,A(\overline K))$.
 Suppose that $\xi_f=1$; then the cocycle corresponding to~$f$
is the coboundary of an element $a_f\in A(\overline K)$.
 Assume further that there is a $K$\+point $y_0\in Y(K)$.
 Then the point $a_f(y_0)\in Y(\overline K)$ corresponds to
a $K$\+point on~$Y_f$.
 Hence the outer homomorphism~$f$ comes from a $K$\+point on $X$
in this case.

 Taking $Y=A$ (with the action of $A$ in itself by right
multiplications), one discovers that this sufficient condition is
also necessary in this situation, so the groupoid $\Gamma_G(A)$
classifies those homomorphisms $f\:G_K\rarrow G$ for which $\xi_f=1$.
 Similarly, if $Y$ is the principal homogeneous $A$\+space
corresponding to a Galois cohomology class
$\xi_0\in H^1(G_k,A(\overline k))$, then the groupoid $\Gamma_G(Y)$
classifies the homomorphisms~$f$ for which $\xi_f$ is equal to
the image of $\xi_0$ in $H^1(G_K,A(\overline K))$.

\begin{proof}[Sketch of proof of Theorem~2]
 This result is closely related to the ``GAL\+realization'' technique
from inverse Galois theory~\cite{Vol}.
 Let $v_1$,~\dots, $v_n$ be a basis in $V$ and let $L=K(z_1,\dotsc,z_n)$
be the field of rational functions in $n$~transcendental variables
over~$K$.
 Then the natural surjective homomorphism $G_L\rarrow G_K$ splits
into a semidirect product (it suffices to pick, e.~g., a full flag
of smooth absolutely irreducible subvarieties in $\A^n_K$ in order
to obtain a splitting).
 Hence it suffices to show that the composition $f_L\:G_L\rarrow G_K
\rarrow G$ can be lifted to a homomorphism $G_L\rarrow G_E$.

 Since $H^1(G_K,\GL(n,\overline K))=1$, the cocycle obtained by
composing the homomorphism $f\:G_K\rarrow G$ with the embedding
$G\rarrow\GL(V)\rarrow\GL(K\ot_kV)\rarrow\GL(\overline K\ot_k V)$
is the coboundary of a certain invertible matrix $a_f\in
\GL(\overline K\ot_kV)$.
 Consider the scheme $Y=\Spec L\ot_k k(V)$ over~$L$; it is
the intersection of open complements in the affine space $\A^n_L$ to
the zero sets of all polynomial equations with coefficients in~$k$.
 The vector $y_0=z_1v_1+\dotsb+z_nv_n\in L\ot_kV$ defines
an $L$\+point in $Y$, and the vector $a_f(y_0)\in\overline L\ot_kV$
is an $\overline L$\+point in~$Y$.
 Taking the image of $a_f(y_0)$ under the projection map
$Y\rarrow X=Y/G$ (where $G$ acts in $Y$ via $k(V)$), we obtain
an $L$\+point $q\:\Spec L\rarrow X$ in $X$ such that
the homomorphism~$f_L$ corresponds to the pulback of the covering
$Y\rarrow X$ along the morphism~$q$.

 It remains to notice that the morphism $\Spec L\ot_k\overline{k(V)}
\rarrow\Spec L\ot_k k(V)^G=X$ is an infinite Galois covering with
the Galois group isomorphic to~$G_E$.
 Pulling it back along the point~$q$ provides the desired lifting
$\tilde f_L\:G_L\rarrow G_E$.
\end{proof}

\begin{proof}[Sketch of proof of Theorem~3]
 Suppose that a profinite group homomorphism $f\:G_K\allowbreak
\rarrow G\subset\PGL(2,k)$ can be lifted to $G'\subset\GL(2,k)$.
 Then the cocycle obtained by composing~$f$ with the embeddings
$\PGL(2,k)\subset\PGL(2,K)\subset\PGL(2,\overline K)$ lifts to
a cocycle with coefficients in $\GL(2,\overline K)$.
 Hence this cocycle is the coboundary of some group element
$a_f\in\PGL(2,\overline K)$.

 Arguing as in the proof of Theorem~2, consider the field of
rational functions in a transcendental variable $L=K(z)$.
 We have to show that the composition $f_L\:G_L\rarrow G_K
\rarrow G$ can be lifted to a profinite group homomorphism
$\tilde f_L\:G_L\rarrow\Fr$.

 Let the group $\PGL(2)$ act in the projective line $\P^1$.
 Consider the scheme $Y=\Spec L\ot_k k(\P^1)$ over $L$;
it is the projective line $\P^1_L$ with all the points defined
over~$k$ thrown out.
 Let $y_0\in Y(L)$ be a point with the coordinate transcendental
over~$K$; then $a_f(y_0)$ in an $\overline L$\+point in~$Y$.
 The finite group $G$ acts freely in $Y$; and the image of
$a_f(y_0)$ under the projection map $Y\rarrow X=Y/G$ is
an $L$\+point $q\:\Spec L\rarrow X$ to which the homomorphism~$f_L$
corresponds.

 The natural morphism $\Spec L\ot_k\overline{k(\P^1)}\rarrow
\Spec L\ot_kk(\P^1)^G$ is an infinite Galois covering with
the Galois group isomorphic to $G_E=\Gal(\overline{k(\P^1)}/k(\P^1)^G)$.
 Pulling back this covering along the point~$q$ provides a lifting
of the homomorphism~$f$ into the group~$G_E$.
 Now by Tsen's theorem the field $E=k(\P^1)^G$ has cohomological
dimension~$1$, so any epimorphism onto $G_E$ from a profinite
group splits.
\end{proof}

 It remains to explain why (and when) the condition of liftability
to $\GL(2)$ is nontrivial in Theorem~3.
 The following lemma is due to Bogomolov.

\begin{lem}
 For any finite group $G$ and an algebraically closed field~$k$ with\/
$\chr k\ne l$, there exists a field $E$ over $k$ and a profinite
group homomorphism $G_E\rarrow G$ such that the induced map
$H^2(G,\Q_l/\Z_l)\rarrow H^2(G_E,\Q_l/\Z_l)$ is injective.
\end{lem}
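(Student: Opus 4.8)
The plan is to realize the cohomology of $G$ inside Galois cohomology by means of the field of invariants of a faithful representation, just as in Theorem~2. Choose a faithful linear representation $V$ of $G$ over $k$ (we are free to pick a convenient one), set $F=k(V)$ and $E=F^G=k(V)^G$, and let $G_E=\Gal(\overline F/E)\rarrow\Gal(F/E)=G$ be the natural surjection attached to the Galois covering $F/E$; this is the covering of Theorem~2. Since $k$ is algebraically closed with $\chr k\ne l$, it contains all $l$\+power roots of unity, so $\mu_{l^n}\cong\Z/l^n$ is a trivial Galois module and $\Q_l/\Z_l\cong\mu_{l^\infty}$. What has to be shown is that the inflation map $H^2(G,\Q_l/\Z_l)\rarrow H^2(G_E,\Q_l/\Z_l)$ along this surjection is injective.

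First I would apply the Hochschild--Serre spectral sequence to the extension $1\rarrow G_F\rarrow G_E\rarrow G\rarrow1$ with the trivial coefficient module $\Q_l/\Z_l$. Its five-term exact sequence identifies the kernel of inflation on $H^2$ with the cokernel of the restriction map $H^1(G_E,\Q_l/\Z_l)\rarrow H^1(G_F,\Q_l/\Z_l)^G$ (equivalently, with the image of the transgression $d_2$). So the whole problem reduces to showing that every $G$\+invariant degree-one class over $F$ descends to $E$, i.e. that this restriction map is surjective.

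Next I would compute the target, level by level, by Kummer theory. As $k$ is algebraically closed the group $k^\ast$ is $l$\+divisible, whence $H^1(G_F,\mu_{l^n})\cong F^\ast/(F^\ast)^{l^n}\cong\bigoplus_P\Z/l^n$, the sum being over the irreducible hypersurfaces $P\subset V$, on which $G$ acts by permutations. This is a permutation module, so its $G$\+invariants are spanned by the orbit sums $\sum_{P\in\mathcal O}[P]$, one per $G$\+orbit $\mathcal O$. Each orbit sum is represented by the product $f_{\mathcal O}$ of the defining polynomials of the $P\in\mathcal O$, a polynomial whose divisor is $G$\+invariant. When $l\nmid|G|$ the descent is immediate: the norm $N_{F/E}(f_{\mathcal O})$ lies in $E^\ast$ and is congruent to $|G|\cdot[f_{\mathcal O}]$ modulo $l^n$\+th powers, so one divides by the unit $|G|$.

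The main obstacle is the remaining $l$\+part, $l\mid|G|$, which is exactly where the interesting cohomology sits and where the norm trick fails. Here $f_{\mathcal O}$ is in general only a \emph{semi}\+invariant, $\sigma(f_{\mathcal O})=\chi(\sigma)\.f_{\mathcal O}$ for a character $\chi\:G\rarrow k^\ast$, and so it need not itself lie in $E^\ast$; one must prove that its class in $F^\ast/(F^\ast)^{l^n}$ is nonetheless represented by a genuine $G$\+invariant, i.e. that the character obstruction dies modulo $l^n$\+th powers. This descent of invariant Kummer classes is the heart of the matter and is precisely Bogomolov's contribution, resting on the rationality of $k(V)$. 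To attack it I would exploit the freedom in the choice of $V$: by the no-name lemma $k(V\oplus W)^G$ is purely transcendental over $k(V)^G$ for any representation $W$, while a purely transcendental extension induces an injection on $H^2(-,\Q_l/\Z_l)$ (the Faddeev/Auslander--Brumer sequence for the Brauer group), so it suffices to establish the descent for one conveniently chosen $V$. Granting it, the restriction map on $H^1$ is surjective, $d_2$ vanishes, and inflation on $H^2$ is injective. Finally I would note that this is the mechanism behind the Saltman--Bogomolov theory: the classes in the Bogomolov multiplier of $G$ are exactly those whose (injective) images in $H^2(G_E,\Q_l/\Z_l)$ become unramified Brauer classes.
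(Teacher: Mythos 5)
Your setup coincides with the paper's: the same covering $F=k(V)$ over $E=k(V)^G$, the same reduction via the five-term inflation--restriction sequence to surjectivity of $H^1(G_E,\Q_l/\Z_l)\rarrow H^1(G_F,\Q_l/\Z_l)^G$, the same Kummer-theoretic identification, and the same observation that $G$ permutes the prime divisors. But the step you yourself call the heart of the matter --- that the ``character obstruction'' of the semi-invariant $f_{\mathcal O}$ dies modulo $l^n$\+th powers --- is left entirely unproved (``Granting it, \dots''), and in fact it is \emph{false} at a fixed finite level. Take $G=\Z/l$ acting on $F=k(t)$ by $t\mapsto\zeta t$, so $E=k(t^l)$: the class of $t$ in $F^*/(F^*)^{l}$ is $G$\+invariant (since $\zeta\in k^*$ is an $l$\+th power), yet it is not in the image of $E^*$, because the valuation at $t=0$ of any element of $E^*$, computed in $F$, is divisible by~$l$. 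So the finite-level restriction map is not surjective --- consistently with the paper's later remark that the kernel of $H^2(G,\Z/l^n)\rarrow H^2(G_E,\Z/l^n)$ is the image of the Bockstein, which is nonzero already for $G=\Z/l$. Your proposed escape route via the no-name lemma only shows that the answer is independent of the faithful $V$ chosen; it never establishes the descent for any particular $V$, so the gap remains.

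The point you are missing is that the divisibility of the coefficient module $\Q_l/\Z_l$ must be used, i.e., one should argue in the colimit rather than at level $\Z/l^n$. The paper does this by applying $G$\+cohomology to the exact sequence $0\rarrow F^*/k^*\rarrow (F^*/k^*)\ot_\Z\Q\rarrow F^*\ot_\Z\Q/\Z\rarrow 0$: since $F^*/k^*$ is a permutation module (the scalar ambiguity in the action on irreducible polynomials disappears after dividing by $k^*$), one has $H^1(G,F^*/k^*)=0$ because $H^1(H,\Z)=\Hom(H,\Z)=0$ for finite $H$; hence $(F^*/k^*)^G\ot\Q$ surjects onto $(F^*\ot\Q/\Z)^G$. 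The group of semi-invariant characters that worries you is precisely the finite group $(F^*/k^*)^G/(E^*/k^*)\subset\Hom(G,k^*)$, and being finite torsion it is annihilated by $\ot\,\Q$, so $(F^*/k^*)^G\ot\Q=(E^*/k^*)\ot\Q$ and the needed surjectivity follows. In the example above this is visible concretely: the class of $t$ maps to the class of $t^l\in E^*$ at the next level of the colimit. Your closing remark about the Bogomolov multiplier conflates injectivity of inflation with unramifiedness of the image, which is a different (and genuinely harder) question not needed here.
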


\begin{proof}
 Let $F/E$ be a field extension with the Galois group $\Gal(F/E)=G$,
so $G_E/G_F\simeq G$.
 Then from the inflation-restriction sequence one concludes that,
for any abelian group $A$ endowed with the trivial action of $G_E$,
the map $H^2(G,A)\rarrow H^2(G_E,A)$ is injective if and only if
the map $H^1(G_E,A)\rarrow H^1(G_F,A)^G$ is surjective.
 Taking $A=\Q_l/\Z_l$ and identifying this group with the group
of $l^\infty$\+roots of unity $\mu_{l^\infty}\subset k=\overline k
\subset E$, we have $H^1(G_E,\mu_{l^\infty})=E^*\ot_\Z\Q_l/\Z_l$
and $H^1(G_F,\mu_{l^\infty})=F^*\ot_\Z\Q_l/\Z_l$.
 So it suffices to find a field extension $F/E$ over $k$ with
$\Gal(F/E)=G$ such that the map $E^*\ot_\Z\Q/\Z\rarrow
(F^*\ot_\Z\Q/\Z)^G$ is surjective.
 From the short exact sequence $F^*/k^*\rarrow F^*/k^*\ot_\Z\Q
\rarrow F^*\ot_\Z\Q/\Z$ and the observation that the group~$k^*$
is divisible (so $(F^*/k^*)^G=E^*/k^*$), we further conclude that
it is enough to produce a field extension $F/E$ for which
$H^1(G,F^*/k^*)=0$.

 Now let $V$ be a finite-dimensional $k$\+vector space with a faithful
action of~$G$.
 Set $F=k(V)$ and $E=k(V)^G$.
 Then $F^*/k^*$ is a free abelian group with a natural $\Z$\+basis
formed by the irreducible polynomials in $\dim V$ variables over~$k$.
 This basis is permuted by the action of $G$, so $H^1(H,\Z)=0$ for
any finite group $H$ acting trivially in $\Z$ implies
$H^1(G,F^*/k^*)=0$.
\end{proof}

 For any field $E$ containing $k=\overline k$, the maps
$H^2(G_E,\Z/l^n)\rarrow H^2(G_E,\Q_l/\Z_l)$ induced by
the embeddings $\Z/l^n\rarrow\Q_l/\Z_l$ are injective.
 Hence it follows from the lemma that a cohomology class
in $H^2(G,\Z/l^n)$ dies in $H^2(G_E,\Z/l^n)$ for all fields $E$
over $k$ and all homomorphisms $G_E\rarrow G$ if and only if
it dies in $H^2(G,\Q_l/\Z_l)$, i.~e., belongs to the image
of some Bockstein map $H^1(G,\Z/l^m)\rarrow H^2(G,\Z/l^n)$.
 Thus, given any central extension of finite groups
$0\rarrow\Z/l^n\rarrow G'\rarrow G\rarrow 1$ which is not induced
from a central extension of cyclic groups $\Z/l^{n+m}\rarrow\Z/l^m$,
there exists a homomorphism $G_E\rarrow G$ that cannot be
lifted to~$G'$.

\bigskip

 Questions: (1) The Kummer theory can be formulated for fields without
roots of unity using the Galois cohomology with cyclotomic
coefficients.
 Of course, the Kummer theory for fields without roots of unity could
be also formulated as a lifting property in the spirit of Theorem~2
(as in the examples of universally solvable embedding problems
in~\cite{Lur}), but the formulation in terms of the cyclotomic
cohomology is in some sense clearly preferable.

 Is there a similar cohomological formulation of the above results
(such as Theorems~1 and~3), extending them to fields without
algebraically closed subfields using noncommutative Galois
cohomology with (perhaps close-to-trivial, but) nontrivial
coefficients rather than lifting properties?

\medskip

 (2) The above reads more like an exposition in inverse Galois theory
(perhaps because Question~(1) remains unanswered).
 Is there any actual relation with the Langlands program?

\bigskip

\textbf{Acknowlegdement.}
 This note was originally written in responce to an invitation of
the organizers of the workshop ``Geometric methods in the mod~$p$
local Langlands correspondence'', which took place in Pisa between
June~6--10, 2016, to suggest questions and topics for discussion
at the workshop.
 The author is grateful to the organizers for the stimulating
invitation to suggest a discussion topic, as well as for the invitation
to the workshop.

\medskip


\begin{thebibliography}{9}

\bibitem{Lur}
 B.~B.~Lur'e.
   On universally solvable embedding problems. (Russian)
\textit{Trudy Mat.\ Inst.\ Steklov.}\ \textbf{183}, Nauka, Leningrad,
p.~121--126, 1990.  English translation in: \textit{Proceedings of
the Steklov Institute of Mathematics} \textbf{183}, p.~141--147, 1991.

\bibitem{Pdivis}
 L.~Positselski.
   Galois cohomology of certain field extensions and the divisible
case of Milnor--Kato conjecture.
\textit{K\+Theory} \textbf{36}, \#1--2, p.~33--50, 2005.
\texttt{arXiv:math.KT/0209037}

\bibitem{Pbogom}
 L.~Positselski.
   Koszul property and Bogomolov's conjecture.
\textit{Internat.\ Math.\ Research Notices} \textbf{2005}, \#31, 
p.~1901--1936.  Postpublication arXiv version:
\texttt{arXiv:1405.0965v2 [math.KT]}

\bibitem{MO}
``Profinite groups as absolute Galois groups''.
MathOverflow question \texttt{http://mathoverflow.
net/questions/232930/profinite-groups-as-absolute-galois-groups/}

\bibitem{Ser}
 J.-P.~Serre.
   Extensions icosa\'edriques. Letter to J.~D.~Gray.
\textit{Seminaire de Th\'eorie de Nombres de Bordeaux} \textbf{9}
(1979--1980), expos\'e n$^\circ$\,19, p.~1--8.

\bibitem{Voev}
 V.~Voevodsky.
   On motivic cohomology with $\mathbf Z/l$-coefficients.
\textit{Annals of Math.}\ \textbf{174}, \#1, p.~401--438, 2011.
\texttt{arXiv:0805.4430 [math.AG]}

\bibitem{Vol}
 H.~V\"olklein.
   Groups as Galois groups: An introduction.
Cambridge Studies in Advanced Mathematics 53, Cambridge University
Press, 1996.

\end{thebibliography}
\end{document}